\documentclass[11pt]{amsart}

\usepackage{amsmath}
\usepackage{amsfonts}
\usepackage{amssymb}
\usepackage{amsthm}
\usepackage{hyperref}
\usepackage{enumerate}
\usepackage{cleveref}
\usepackage{mathrsfs}
\usepackage[top=0.9in, bottom=1.15in, left=1.1in, right=1.1in]{geometry}
 \usepackage{hyperref}
\hypersetup{colorlinks=true,linkcolor=blue,citecolor=magenta}
\newtheorem{theorem}{Theorem}


\newtheorem{corollary}[theorem]{Corollary}

\Crefname{conjecture}{Conjecture}{Conjectures}

\theoremstyle{plain}

\theoremstyle{plain}


\newtheorem*{thmnonum}{Theorem}
\newtheorem*{FDB}{Fa\`a di Bruno's formula}
\newtheorem*{MD}{MacMahon's decomposition}

\theoremstyle{definition}
\newtheorem{definition}[theorem]{Definition}
\theoremstyle{remark}




\newcommand{\calP}{\mathcal{P}}

\newcommand{\calC}{\mathcal{C}}

\allowdisplaybreaks

\numberwithin{equation}{section}
\numberwithin{theorem}{section}
\author{Robert Schneider and Andrew V. Sills}

\address{Department of Mathematics\newline
University of Georgia\newline
Athens, Georgia 30602}
\email{robert.schneider@uga.edu}

\address{Department of Mathematical Sciences\newline
Georgia Southern University\newline
Statesboro, Georgia 30458}
\email{asills@georgiasouthern.edu}

%
\title[Analysis and combinatorics of partition zeta functions]{Analysis and combinatorics of partition zeta functions}
\begin{document}
\begin{abstract}
We examine ``partition zeta functions'' analogous to the Riemann zeta function but 
summed over subsets of integer partitions. We prove an explicit formula for a family of 
partition zeta functions already shown to have nice properties --- those summed over 
partitions of fixed length --- which yields complete information about analytic 
continuation, poles and trivial roots of the zeta functions in the family. Then we present a 
combinatorial proof of the explicit formula, which shows it to be a zeta function analog of 
MacMahon's partial fraction decomposition of the generating function for partitions of 
fixed length. 
\end{abstract}
\maketitle

\centerline{Dedicated to Bruce Berndt on the occasion of his $80${th} birthday}
\  

\section{Introduction: partition zeta functions}
Here we study an interesting class of objects dwelling at the intersection of partition theory and the theory of $\text{L}$-functions. Let $\mathcal P$ denote the set of integer partitions (see e.g. \cite{Andrews, Berndt}), with $\mathcal P_{S}$ being partitions whose parts all belong to a subset $S\subseteq \mathbb N$ of natural numbers. 
Let $\lambda=(\lambda_1, \lambda_2,\lambda_3,...,\lambda_r)$ denote a generic partition, $
\lambda_1 \geq \lambda_2 \geq ... \geq \lambda_r \geq 1$, with $\emptyset$ the empty 
partition. 
Let $|\lambda|$ denote the sum of parts or {\it size} of $\lambda$, with $|
\emptyset|:=0$. 
Let $N(\lambda)$ denote the product of the parts or {\it norm} of the 
partition, with $N(\emptyset):=1$. 
Let $\ell(\lambda):=r$ denote the number of parts or \emph{length} of a partition, with $\ell(\emptyset):=0$. 

Then in analogy to the Riemann zeta function $\zeta(s):=\sum_{n=1}^{\infty}{n^{-s}}$, 
convergent for $\operatorname{Re}(s)>1$, 
we give the definition of a {\it partition zeta function} as studied in \cite{ORS, Robert_zeta, PhD}.
\begin{definition}\label{ch1pzf}
For a proper subset $\mathcal P'\subset \mathcal P$ 
and value $s\in \mathbb C$ for which the series converges, 
we define a {\it partition zeta function} to be the following sum over partitions in $\mathcal P'$: 
\begin{equation}\label{ch1zetadef}
\zeta_{\mathcal P'}(s):=\sum_{\lambda\in \mathcal P'} N(\lambda)^{-s}.
\end{equation}
\end{definition}

We note that $\zeta_{\mathcal P}(s)$ itself diverges\footnote{And similarly if the maximum number of $1$'s appearing in partitions in $\mathcal P'$ is unrestricted}: partitions of the shape $\lambda=(1,1,1,...,1)$ contribute infinitely many $1$'s to the sum. If $\mathcal P'$ is of the class $\mathcal P_{S}$ of partitions into elements from some subset $S \subsetneq \mathbb N$ where $1\not\in S$, then the associated zeta function has an Euler product as well:
\begin{equation}\label{eulerprod}
\zeta_{\mathcal P_{S}}(s)=\prod_{n\in S} \left(1-n^{-s}\right)^{-1},
\end{equation}
and standard techniques working with products give other variants. 

Of course, in this setting the Riemann zeta function $\zeta(s)$ represents the case $\zeta_{\mathcal P_{\mathbb P}}(s)$ (sum over partitions into prime parts). These combinatorial zeta functions (as well as related {\it partition Dirichlet series}) form a highly general class of objects, yet they share many structural laws, which specialize to well-known classical zeta function (and Dirichlet series) identities, as well as more exotic non-classical cases such as the following, proved in \cite{Robert_zeta}, for partitions into even parts, distinct parts, and parts not equal to 1, respectively: 
\begin{flalign}\label{ch1examples}
\zeta_{\mathcal P_{\text{even}}}(2)=\frac{\pi}{2},\  \  \  \  \  \  \  \  \   \zeta_{\mathcal 
P_{\text{dist}}}(2)=\frac{\operatorname{sinh} \pi}{\pi},\  \  \  \  \  \  \  \  \   \zeta_{\mathcal 
P_{\neq1}}(3)=\frac{3\pi}{\operatorname{cosh}\left(\frac{1}{2}\pi \sqrt{3}\right)}
.\end{flalign}
Since choice of subset $\mathcal P' \subsetneq \mathcal P$ as well as domain of $s\in \mathbb C$ determine a given partition zeta function, each pair of choices yields distinctive yet disparate evaluations and analytic properties. 

Then identifying interesting non-classical cases seems like something of a needle-in-a-haystack type of undertaking. One wonders: are there non-classical families of partition 
zeta functions having such nice properties that they truly parallel, for instance, Euler's 
evaluations of $\zeta(s)$ at positive even arguments: 
$$\zeta(2m)=\pi^{2m}\times \text{rational number?}$$ 
\  

\section{A nice family of partition zeta functions}\label{Sect2}
A particularly nice class of zeta functions is defined in \cite{Robert_zeta}, that generalizes $\zeta(s)$ in a more fruitful direction than just writing $\zeta(s)=\zeta_{\mathcal P_{\mathbb P}}(s)$.
\begin{definition} For $\operatorname{Re}(s)>1$, we define the family of zeta sums taken over all partitions of fixed length $k\geq 0$:
$$\zeta_{\mathcal P}(\{s\}^k):=\sum_{\ell(\lambda)=k}\frac{1}{N(\lambda)^s},$$
with $\zeta_{\mathcal P}(\{s\}^0):=N(\emptyset)^{-s}=1$.
\end{definition}

The $k=1$ case is $\zeta(s)$. For $|z|<1$ we have the obvious generating function:
\begin{equation}\label{zetagen}
\prod_{n=1}^{\infty}\left(1-z n^{-s} \right)^{-1}=\sum_{k=0}^{\infty}\zeta_{\mathcal P}(\{s\}^k)z^k.
\end{equation}

The first author proves in \cite{Robert_zeta} that for argument $s=2$ and $k\geq 1$, these partition zeta functions 
are actually rational multiples of Euler's zeta values (and of $\pi^{2k}$).
\begin{thmnonum}[Schneider]\label{zeta} For $k\geq 1$ we have 
\begin{equation*}
\zeta_{\mathcal P}(\{2\}^k) = \frac{2^{2k - 1} - 1}{2^{2k-2}}\zeta(2k).
\end{equation*}
\end{thmnonum}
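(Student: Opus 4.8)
The plan is to exploit the generating function \eqref{zetagen} specialized to $s=2$, which packages all the values $\zeta_{\mathcal P}(\{2\}^k)$ into a single analytic function whose Taylor coefficients can then be read off. Setting $s=2$ in \eqref{zetagen} gives
$$\sum_{k=0}^{\infty}\zeta_{\mathcal P}(\{2\}^k)\, z^k = \prod_{n=1}^{\infty}\left(1 - \frac{z}{n^2}\right)^{-1}.$$
First I would recognize the product on the right as the reciprocal of a specialization of the classical Euler product for the sine. Writing $z = w^2$ and recalling $\prod_{n\geq 1}\left(1 - w^2/n^2\right) = \frac{\sin(\pi w)}{\pi w}$, the product becomes $\frac{\pi w}{\sin(\pi w)}$, so after substituting $w = \sqrt{z}$ the generating function collapses to the closed form
$$\sum_{k=0}^{\infty}\zeta_{\mathcal P}(\{2\}^k)\, z^k = \frac{\pi\sqrt{z}}{\sin(\pi\sqrt{z})}.$$

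Next I would extract coefficients. Putting $y = \pi\sqrt{z}$, so that $z^k = y^{2k}/\pi^{2k}$, reduces the task to the Taylor expansion of the even function $y/\sin y$ about $y=0$. This expansion is classical and is governed by the Bernoulli numbers through the cosecant series; explicitly,
$$\frac{y}{\sin y} = 1 + \sum_{k=1}^{\infty}\frac{(-1)^{k-1}\,2\,(2^{2k-1}-1)\,B_{2k}}{(2k)!}\, y^{2k}.$$
Matching the coefficient of $y^{2k}$ on both sides for $k\geq 1$ then yields
$$\zeta_{\mathcal P}(\{2\}^k) = \pi^{2k}\cdot\frac{(-1)^{k-1}\,2\,(2^{2k-1}-1)\,B_{2k}}{(2k)!}.$$

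Finally I would convert the Bernoulli number back into a zeta value via Euler's formula $\zeta(2k) = \frac{(-1)^{k-1}(2\pi)^{2k}B_{2k}}{2\,(2k)!}$, which rearranges to $(-1)^{k-1}\pi^{2k}B_{2k} = \frac{(2k)!}{2^{2k-1}}\zeta(2k)$. Substituting this identity into the coefficient formula makes the factorials and most powers of two cancel, leaving exactly $\frac{2^{2k-1}-1}{2^{2k-2}}\zeta(2k)$. The main obstacle is bookkeeping rather than conceptual: one must pin down the precise form of the cosecant expansion — in particular the factor $2^{2k-1}-1$, which is the true source of the numerator in the claimed identity — and then track the signs and powers of two through Euler's formula without error. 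A minor technical point to record is the legitimacy of reading off Taylor coefficients; this is justified because the product converges locally uniformly for $|z|$ small, so the generating series has positive radius of convergence and its coefficients are uniquely determined.
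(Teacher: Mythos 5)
Your proof is correct: the specialization of \eqref{zetagen} at $s=2$ to $\pi\sqrt{z}/\sin(\pi\sqrt{z})$, the cosecant--Bernoulli expansion, and the conversion via Euler's formula all check out (e.g.\ $k=2$ gives $\tfrac{7}{4}\zeta(4)=7\pi^4/360$, matching the coefficient of $z^2$ in $y/\sin y$). The paper states this theorem without proof, citing \cite{Robert_zeta}; your argument via the sine product is essentially the one given in that reference, so it is the same approach and there is nothing further to compare.
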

We note that setting $k=0$ in Proposition \ref{zeta} suggests formally that $\zeta(0) = 
\frac{2^{-2}}{2^{-1} - 1}\zeta_{\mathcal P}(\{2\}^0) = -1/2$, which is the correct value for $
\zeta(0)$ obtained through analytic continuation. 
In \cite{ORS}, Ono, Rolen and the first author 
prove other facts about partition zeta functions, including 
special cases yielding analytic continuation (restricted, however, to the right half of the 
complex plane), and a farther-reaching follow-up to the preceding theorem.
\begin{thmnonum} [Ono--Rolen--Schneider]
For $m\geq 1, k\geq 1,$ we have
$$\zeta_{\mathcal P}(\{2m\}^k)=\pi^{2mk}\times \text{rational number}.$$ 
\end{thmnonum}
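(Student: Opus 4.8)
The plan is to recognize $\zeta_{\mathcal{P}}(\{s\}^k)$ as a symmetric-function evaluation and then to invoke Euler's formula $\zeta(2N) = \pi^{2N} \times (\text{rational})$ term by term. Setting $x_n := n^{-s}$, the generating function \eqref{zetagen} shows that $\zeta_{\mathcal{P}}(\{s\}^k)$ is precisely the complete homogeneous symmetric function $h_k$ evaluated at $(x_1, x_2, x_3, \dots)$: the coefficient of $z^k$ in $\prod_n (1 - z x_n)^{-1}$ collects exactly the monomials $\prod_n x_n^{j_n} = N(\lambda)^{-s}$ indexed by partitions $\lambda$ of length $k$. The associated power sums are immediately recognizable as Riemann zeta values,
$$p_j := \sum_{n=1}^{\infty} x_n^j = \sum_{n=1}^{\infty} n^{-js} = \zeta(js),$$
which converge absolutely for $\Re(s) > 1$.

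First I would invoke the classical expansion of the complete homogeneous symmetric functions in terms of power sums. From the exponential identity $\sum_k h_k t^k = \exp\!\big(\sum_{j \geq 1} p_j t^j / j\big)$ one extracts
$$h_k = \sum_{\lambda \vdash k} \frac{p_\lambda}{z_\lambda}, \qquad z_\lambda := \prod_{i \geq 1} i^{m_i}\, m_i!,$$
where $\lambda$ ranges over partitions of $k$, $m_i$ counts the parts of $\lambda$ equal to $i$, and $p_\lambda := \prod_i p_{\lambda_i}$. The coefficients $1/z_\lambda$ are rational. Specializing $s = 2m$ turns each factor $p_{\lambda_i} = \zeta(2m\lambda_i)$ into a Riemann zeta value at a positive even integer.

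The decisive step is then Euler's evaluation $\zeta(2N) = \pi^{2N} \times (\text{rational})$ for every integer $N \geq 1$. Applied to each factor, this gives $\prod_i \zeta(2m\lambda_i) = \pi^{2m \sum_i \lambda_i} \times (\text{rational})$. Because $\lambda$ is a partition of $k$, the exponent collapses uniformly: $\sum_i \lambda_i = k$, so every summand $p_\lambda / z_\lambda$ equals $\pi^{2mk}$ times a rational number, \emph{independent of} $\lambda$. Summing the finitely many terms over all $\lambda \vdash k$ yields $\zeta_{\mathcal{P}}(\{2m\}^k) = \pi^{2mk} \times (\text{rational})$, as claimed.

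The real content is not any single computation but the observation that the $\pi$-power is constant across the partition sum; this is forced by homogeneity, since $h_k$ has degree $k$ and each variable $x_n = n^{-2m}$ carries a uniform weight. I do not anticipate a genuine obstacle. The one point requiring care is confirming that the polynomial identity for $h_k$ remains valid under the analytic specialization $x_n = n^{-s}$, which holds because the relevant series converge absolutely for $\Re(s) > 1$ and the formula is a finite identity in the $p_j$. As a sanity check, for $m = 1$ this recovers Schneider's theorem above; e.g. $\zeta_{\mathcal{P}}(\{2\}^2) = \tfrac{1}{2}\zeta(4) + \tfrac{1}{2}\zeta(2)^2 = \tfrac{7}{360}\pi^4$, matching $\tfrac{7}{4}\zeta(4)$.
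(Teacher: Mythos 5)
Your proposal is correct, and it is essentially the argument the paper itself supplies: your symmetric-function identity $h_k=\sum_{\lambda\vdash k}p_\lambda/z_\lambda$ with $p_j=\zeta(js)$ and $z_\lambda=N(\lambda)\,m_1!\,m_2!\cdots$ is precisely Theorem \ref{Pennthm}, which the paper proves by the same exponential identity (writing the product in \eqref{zetagen} as $\exp\bigl(\sum_{j\geq 1}\zeta(js)z^j/j\bigr)$ and applying Fa\`a di Bruno's formula). The paper quotes the Ono--Rolen--Schneider statement from \cite{ORS} without proof, but your deduction of it from the closed formula via Euler's evaluation $\zeta(2N)=\pi^{2N}\times(\text{rational})$ and the homogeneity observation $\sum_i\lambda_i=k$ is exactly the intended route.
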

So these zeta sums over partitions of fixed length do indeed form a family like Euler's 
zeta values; but zeta values are only the beginning of the story of the zeta function, as 
Riemann displayed in a brilliant sequel to Euler's work. Then it is natural to ask about 
analytic properties of the general case $\zeta_{\mathcal P}(\{s\}^k), \operatorname{Re}
(s)>1$. Here we prove a closed formula for the partition zeta function for each $k\geq 
0$, that speaks to this question. 

Let ``$\lambda \vdash n$'' mean that $\lambda$ is a partition of size $n\geq 0$, 
and let $m_j=m_j(\lambda)\geq 0$ denote the {\it multiplicity} of $j\geq 1$ as a part of 
partition $\lambda$.
%
%
\begin{theorem}\label{Pennthm}
For $\operatorname{Re}(s)>1, k \geq 0$, we have  
$$\zeta_{\mathcal P}(\{s\}^k)=\sum_{\lambda \vdash k}
\frac{\zeta(s)^{m_1}\zeta(2s)^{m_2}\zeta(3s)^{m_3}\cdots \zeta(ks)^{m_k}}{N(\lambda)\  
m_1!\  m_2!\  m_3! \cdots m_k!},$$
where the sum on the right-hand side is taken over the partitions of size $k\geq 0$.
\end{theorem}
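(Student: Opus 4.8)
The plan is to extract the coefficient of $z^k$ from the generating function \eqref{zetagen} after rewriting the infinite product as a single exponential. First I would take the logarithm of the left-hand side of \eqref{zetagen} and expand $-\log(1-zn^{-s})$ as a power series in $z$, obtaining, for $|z|$ sufficiently small and $\operatorname{Re}(s)>1$,
$$\log\prod_{n=1}^{\infty}\left(1-zn^{-s}\right)^{-1}=\sum_{n=1}^{\infty}\sum_{j=1}^{\infty}\frac{n^{-js}}{j}z^j=\sum_{j=1}^{\infty}\frac{\zeta(js)}{j}z^j,$$
where the interchange of summation is justified because $\sum_{n,j}|n^{-js}z^j|/j$ converges for $\operatorname{Re}(s)>1$ and $|z|<1$. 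Hence \eqref{zetagen} becomes
$$\sum_{k=0}^{\infty}\zeta_{\mathcal P}(\{s\}^k)z^k=\exp\left(\sum_{j=1}^{\infty}\frac{\zeta(js)}{j}z^j\right).$$

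Next I would compute the Taylor coefficient of $z^k$ on the right-hand side. Writing $a_j:=\zeta(js)/j$ and $A(z):=\sum_{j\geq1}a_jz^j$, I would expand $\exp(A(z))=\sum_{n\geq0}A(z)^n/n!$ and apply the multinomial theorem to $A(z)^n$. Collecting the ordered $n$-tuples $(j_1,\dots,j_n)$ with $j_1+\cdots+j_n=k$ according to the multiset of values they realize --- equivalently, according to the multiplicities $m_j$ of each part $j$, which satisfy $\sum_j jm_j=k$ and $\sum_j m_j=n$ --- the number of tuples producing a fixed multiset is $n!/\prod_j m_j!$, and the factors of $n!$ cancel, yielding
$$[z^k]\exp(A(z))=\sum_{\substack{m_1,m_2,\dots\geq0\\ \sum_j jm_j=k}}\prod_{j\geq1}\frac{a_j^{m_j}}{m_j!}.$$
This is exactly a sum over partitions $\lambda\vdash k$, the partition $\lambda$ being encoded by its multiplicity sequence $(m_1,m_2,\dots)$. (Equivalently, one may read this coefficient off directly from Fa\`a di Bruno's formula applied to the composition $\exp\circ A$.)

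Finally I would substitute $a_j=\zeta(js)/j$ into this expression. Since $\prod_{j\geq1}j^{m_j}=N(\lambda)$ is precisely the norm of the partition with multiplicities $m_j$, one gets
$$\prod_{j\geq1}\frac{a_j^{m_j}}{m_j!}=\frac{\prod_{j\geq1}\zeta(js)^{m_j}}{N(\lambda)\prod_{j\geq1}m_j!},$$
and comparing the coefficients of $z^k$ on the two sides establishes the claimed identity; the equality holds for all $\operatorname{Re}(s)>1$ since each coefficient is a finite sum over partitions of $k$. The only points requiring care are the justification of the logarithm and rearrangement step (handled by absolute convergence for $\operatorname{Re}(s)>1$, $|z|<1$) and the combinatorial bookkeeping of the multinomial coefficient. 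I expect the latter --- correctly matching ordered tuples to partitions and verifying that the surviving denominator is exactly $N(\lambda)\,m_1!m_2!\cdots$ --- to be the main place where one must be attentive, though it is ultimately routine once the exponential form is in hand.
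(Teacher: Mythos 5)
Your proposal is correct and follows essentially the same route as the paper: rewrite the product in \eqref{zetagen} as $\exp\left(\sum_{j\geq 1}\zeta(js)z^j/j\right)$, extract the coefficient of $z^k$ via the exponential-of-a-power-series expansion (which is exactly the partition form of Fa\`a di Bruno's formula that the paper states and proves via the multinomial theorem), and identify $\prod_j j^{m_j}$ with $N(\lambda)$. The only difference is that you inline the multinomial bookkeeping rather than invoking the formula as a separate lemma, which is immaterial.
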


To prove the theorem, we need the Maclaurin expansion of the natural logarithm for $|x|<1$: 
\begin{equation}\label{Maclaurin}
-\log(1-x)=\sum_{j=1}^{\infty}\frac{x^j}{j},
\end{equation} 
as well as a partition version of {\it Fa\`{a} di Bruno's formula}. For completeness, we give a quick proof of the classical identity below.


\begin{FDB}\label{Faa}
Take $\operatorname{exp}(t):=e^t$ and $a_j, x \in \mathbb C$ such that $\sum_{j=1}^{\infty}a_j x^j$ converges. Then we have
$$\operatorname{exp}\left({\sum_{j=1}^{\infty}a_j x^j}\right)=\sum_{\lambda\in \mathcal P}x^{|\lambda|}\frac{a_1^{m_1}a_2^{m_2}a_3^{m_3}\cdots}{m_1!\  m_2!\  m_3!\cdots}=\sum_{k=0}^{\infty}x^k \sum_{\lambda \vdash k}\frac{a_1^{m_1}a_2^{m_2}a_3^{m_3}\cdots\  a_k^{m_k}}{m_1!\  m_2!\  m_3!\cdots  m_k!}.$$
 \end{FDB}

\begin{proof}[Proof of Fa\`a di Bruno's formula]
We begin with the classical {\it multinomial theorem}, written as a sum over length-$n$ partitions $\lambda$ in the set $\mathcal P_{[k]}\subset \mathcal P$ whose parts are all $\leq k$: 
\begin{equation}\label{Dmultinomial}
(a_1+a_2+a_3+\cdots+a_k)^n=n! \sum_{\substack{\lambda\in\mathcal P_{[k]}\\\ell(\lambda)=n}} \frac{a_1^{m_1}a_2^{m_2}a_3^{m_3}...a_k^{m_k}}{m_1!\  m_2!\  m_3!\  ...\  m_k!}.
\end{equation}
If we let $k$ tend to infinity, assuming the infinite sum $a_1+a_2+a_3+\cdots$ converges, the series on the right becomes a sum over all partitions of length $n$. Then dividing both sides of (\ref{Dmultinomial}) by $n!$ and summing over $n\geq 0$, 
the left-hand side yields the Maclaurin series expansion for $\operatorname{exp}(a_1+a_2+a_3+\cdots)$, and the right side can be rewritten as a sum over all partitions: 
\begin{equation}\label{Dmultinomial4}\operatorname{exp}(a_1+a_2+a_3+\cdots)=\sum_{\lambda\in \mathcal P} \frac{a_1^{m_1}a_2^{m_2}a_3^{m_3}...}{m_1!\  m_2!\  m_3!\  ...}.
\end{equation}
To complete the proof, make the substitution $a_k\mapsto a_k x^k$ in (\ref{Dmultinomial4}).
\end{proof}
%

\begin{proof}[Proof of Theorem \ref{Pennthm}]
For $|z|<1, \operatorname{Re}(s)>1$, by \eqref{Maclaurin} we rewrite the product side of generating function \eqref{zetagen} as:
\begin{flalign}
\prod_{n=1}^{\infty}\text{exp}\left(-\log\left(1-{z}{n^{-s}}\right)\right) =\text{exp}
\left(\sum_{n=1}^{\infty}\sum_{j=1}^{\infty}\frac{z^j}{n^{js} j}\right)=\text{exp}
\left(\sum_{j=1}^{\infty}\frac{\zeta(js)}{j }z^j \right).
\end{flalign}
Now setting $x=z$ and $a_j=\zeta(js)/j$ in Lemma \ref{Faa} and noting $N(\lambda)=\prod_{j
\geq 1}j^{m_j}$ for each partition $\lambda$, comparing the co\"efficient of $z^k$ with the 
right-hand side of \eqref{zetagen} gives the theorem.
\end{proof}

Theorem \ref{Pennthm} yields much information about the analytic properties of $\zeta_{\mathcal P}(\{ s\}^k)$.

\begin{corollary}\label{Cor} The partition zeta function $\zeta_{\mathcal P}(\{s\}^k)$:
\begin{enumerate}[i.]
\item inherits analytic continuation from $\zeta(s)$, to the entire complex plane minus poles;
\item has poles at $s=1,\  1/2,\ 1/3,\ 1/4,...,1/k$ with the order of $s=1/j$ being $\left\lfloor k/j \right\rfloor$;
\item has trivial roots at $s=-2, -4, -6, -8,$ etc.;
\item does not have roots at the nontrivial roots of $\zeta(s)$.
\end{enumerate}
\end{corollary}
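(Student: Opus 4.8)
The plan is to read all four analytic statements directly off the explicit formula in Theorem~\ref{Pennthm}, which writes $\zeta_{\mathcal{P}}(\{s\}^k)$ as a \emph{finite} sum of finite products of the functions $\zeta(js)$, $1\le j\le k$. Since $\zeta$ continues meromorphically to $\C$ with its only pole (simple) at argument $1$, each $\zeta(js)$ is meromorphic on $\C$ with a single simple pole at $s=1/j$, and a finite sum of finite products of meromorphic functions is meromorphic; this gives (i). Moreover the only candidate singularities are the points $s=1/j$, and at such a point every other factor $\zeta(is)$, $i\ne j$, is regular (its argument $i/j\ne1$), so $s=1/j$ is the only place the $j$-th factor can blow up. This localizes all poles to $\{1,1/2,\dots,1/k\}$.

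For the pole orders in (ii), note the exponent of $\zeta(js)$ in the term indexed by $\lambda\vdash k$ is the multiplicity $m_j(\lambda)$, which is at most $\lfloor k/j\rfloor$ (the most copies of $j$ a partition of $k$ can contain), so the pole at $s=1/j$ has order at most $q:=\lfloor k/j\rfloor$. To get equality I would extract the leading Laurent coefficient. Only partitions with $m_j=q$ contribute to the $(s-1/j)^{-q}$ term; writing such a $\lambda$ as $q$ copies of $j$ together with a partition $\mu\vdash r$, $r:=k-qj<j$ (whose parts are automatically $\ne j$), and using $\Res_{w=1}\zeta(w)=1$, one finds after peeling off the $q$ singular factors that the coefficient of $(s-1/j)^{-q}$ equals $\tfrac{1}{j^{2q}\,q!}\,\zeta_{\mathcal{P}}(\{s\}^r)\big|_{s=1/j}$, where the right-hand factor is the finite value of the continued length-$r$ zeta function at $1/j$ (finite because $r<j$ keeps every argument $i/j<1$ away from the pole at $1$). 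This is manifestly nonzero when $r=0$, i.e. $j\mid k$ (the coefficient being $1/(j^{2q}q!)$); the main obstacle is to show it is nonzero for $1\le r<j$ as well, equivalently that the continued $\zeta_{\mathcal{P}}(\{s\}^r)$ does not vanish at $s=1/j$. I expect this non-cancellation --- ruling out accidental vanishing of a finite alternating-sign combination of the negative values $\zeta(1/j),\dots,\zeta(r/j)$ --- to be the hardest point in (ii); numerics in small cases suggest the value is in fact always negative for $r\ge1$, which if established would finish the claim.

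Part (iii) is immediate from the same formula: at $s=-2m$ with $m\ge1$ every factor has argument $js=-2jm$, a negative even integer and hence a trivial zero of $\zeta$, so $\zeta(js)=0$ for all $1\le j\le k$ (no poles intervene, since those sit at $s=1/j>0$). For $k\ge1$ each partition of $k$ has at least one part, so every term carries a vanishing factor and the whole sum is $0$.

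Part (iv) is where I would be most careful. At a nontrivial zero $\rho$ of $\zeta$ we have $0<\Re(\rho)<1$ and $\zeta(\rho)=0$, so exactly the terms with $m_1\ge1$ are killed; the survivors are the partitions of $k$ into parts $\ge2$, carrying only factors $\zeta(j\rho)$ with $j\ge2$, and these are not all zero (indeed $\zeta(j\rho)\ne0$ as soon as $\Re(j\rho)>1$). For $k=1$ there are no survivors and the statement genuinely fails --- $\zeta_{\mathcal{P}}(\{s\}^1)=\zeta(s)$ has precisely these zeros --- so (iv) should be read for $k\ge2$. Granting $k\ge2$, the crux is again a non-cancellation: that the surviving sum does not vanish at $s=\rho$. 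The cleanest test case is $k=2$, where $\zeta_{\mathcal{P}}(\{s\}^2)=\tfrac12\big(\zeta(s)^2+\zeta(2s)\big)$ takes the value $\tfrac12\zeta(2\rho)$ at $\rho$, so the claim reduces exactly to $\zeta(2\rho)\ne0$; this is automatic when $\Re(\rho)\ge\tfrac12$ (so in particular under the Riemann Hypothesis, where $\Re(2\rho)=1$ avoids all zeros), and in general it asks only that $\rho$ and $2\rho$ not both be zeros of $\zeta$. I expect establishing this non-cancellation unconditionally to be the genuine difficulty, and I would either invoke the classical zero-free region together with the functional-equation symmetry of the zeros, or else present (iv) in its structural form: nontrivial zeros of $\zeta$ are \emph{not} forced upon $\zeta_{\mathcal{P}}(\{s\}^k)$ the way the trivial zeros are in (iii), since a nontrivial zero annihilates only the $m_1\ge1$ terms rather than every factor.
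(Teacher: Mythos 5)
Your proposal follows the same route as the paper --- every claim is read off the explicit formula of Theorem~\ref{Pennthm} --- but you are substantially more careful, and both difficulties you flag are real. The paper's proof is only a few lines: (i) and (iii) are argued exactly as you do; for (ii) it asserts that the order of the pole at $s=1/j$ ``comes from the partition with the maximum possible number $\lfloor k/j\rfloor$ of $j$'s,'' which establishes only the upper bound and never addresses possible vanishing of the leading Laurent coefficient. Your computation of that coefficient as $j^{-2q}(q!)^{-1}\,\zeta_{\mathcal P}(\{s\}^{r})\big|_{s=1/j}$ with $r=k-qj$ is correct and goes beyond the paper, and the residual non-vanishing claim for $1\le r<j$ is indeed left unproved on both sides. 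For (iv) the paper is explicitly conditional: it concludes only ``on the assumption no single value $s$ can serve as a nontrivial root for all the $\zeta(js)$, $1\le j\le k$, simultaneously'' --- and even that assumption does not rule out cancellation among the surviving terms once $k\ge 4$ (for $k=4$ the value at a nontrivial zero $\rho$ is $\tfrac18\zeta(2\rho)^2+\tfrac14\zeta(4\rho)$), which is precisely the difficulty you isolate through the $k=2$ reduction to $\zeta(2\rho)\ne 0$. Your observation that (iv) fails outright for $k=1$, where $\zeta_{\mathcal P}(\{s\}^1)=\zeta(s)$, is correct and is not noted in the paper. In short, nothing in your argument is wrong; the non-cancellation statements you honestly leave open are exactly the ones the paper treats as obvious or assumes away, and your Laurent-coefficient analysis is a genuine sharpening of its proof of (ii).
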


\begin{proof}
All four items of the corollary follow immediately from well-known analytic properties of the Riemann zeta function. That $\zeta_{\mathcal P}(\{s\}^k)$ inherits analytic continuation from the finite combination of $\zeta(js)$ factors is obvious, and the pole at $s=1/j$ comes from the pole of $\zeta(js)$ at $js=1$ with order of the pole coming from the partition of $k$ with the maximum possible number $\left\lfloor k/j \right\rfloor$ of $j$'s. For $s\in -2\mathbb N$, all the $\zeta(js)$ vanish trivially, giving trivial roots of the partition zeta function; but on the assumption no single value $s$ can serve as a nontrivial root for all the $\zeta(js), 1\leq j \leq k,$ simultaneously, the partition zeta function does not entirely vanish at any nontrivial root of $\zeta(s)$.
\end{proof}

\section{Combinatorial approach}\label{Sect3}

By contrast with the analytic proof of the previous section, in this section we give a combinatorial proof of Theorem \ref{Pennthm}. 
We take an almost identical approach to the second author's work \cite{AS} generating partitions of 
fixed length
and allowing the symmetric group to act on these partitions to create 
\emph{integer compositions}. 

Let $\calC_k$ denote the set of $k$-tuples of positive integers  $c=(c_1, c_2, ..., c_k)$.
These are \emph{integer compositions of length $k$} (or ``$k$-compositions'').
As with partitions, the $c_j$ in $c=(c_1, \dots, c_k)$ are the \emph{parts} of $c$, and the
sum of parts $|c|$ is the \emph{size} of $c$.

We proceed by example. First we recall that $\zeta_{\mathcal P}(\left\{ s \right\}^0)=1$ and the $k=1$ case is simply 
\[ \zeta(s) = \sum_{n\geq 1} n^{-s} = \zeta_\calP (\{s\}^1). \]
The $k=2$ case of Theorem~\ref{Pennthm} may be proved combinatorially as follows:  \vspace{.005in}
\begin{align*}
& \sum_{\lambda\vdash 2} \frac{\zeta(s)^{m_1} \zeta(2s)^{m_2} }{N(\lambda)
m_1! m_2!}
= \frac{1}{2} \zeta(s)^2 + \frac 12 \zeta(2s) \\
&= \frac12 \sum_{n_1\geq 1} n_1^{-s} \sum_{n_2\geq 1} n_2^{-s}
  + \frac 12 \sum_{n\geq 1} (n^2)^{-s} \\
& = \frac 12 \sum_{(n_1,n_2)\in\calC_2} \frac{1}{(n_1n_2)^s}
   + \frac 12\underset{n_1=n_2}{\sum_{(n_1,n_2)\in\calC_2 }}
    \frac{1}{(n_1n_2)^s} \\
& =  \frac 12\left( \underset{n_1\neq n_2}{\sum_{(n_1,n_2)\in\calC_2}} \frac{1}{(n_1n_2)^s}
    +  \underset{n_1=n_2}{\sum_{(n_1,n_2)\in\calC_2} }\frac{1}{(n_1n_2)^s} \right)
   + \frac 12\underset{n_1=n_2}{\sum_{(n_1,n_2)\in\calC_2 }}
    \frac{1}{(n_1n_2)^s} \\   
 & =  \frac 12 \underset{n_1\neq n_2}{\sum_{(n_1,n_2)\in\calC_2}} \frac{1}{(n_1n_2)^s}
    + \left(\frac 12 + \frac 12 \right) \underset{n_1=n_2}{\sum_{(n_1,n_2)\in\calC_2 }}
    \frac{1}{(n_1n_2)^s} \\   
  & =  \frac 12 \underset{n_1\lessgtr n_2}{\sum_{(n_1,n_2)\in\calC_2}} \frac{1}{(n_1n_2)^s}
    + \underset{n_1=n_2}{\sum_{(n_1,n_2)\in\calC_2 }}
    \frac{1}{(n_1n_2)^s} \\        
 &=  \sum_{ n_1 > n_2 \geq 1} \frac{1}{(n_1 n_2)^s} 
     +  \sum_{n_1=n_2 \geq 1} \frac{1}{(n_1n_2)^s }   \\
 &= \sum_{\substack{\pi \in \mathcal P\\ \ell(\pi)=2}}\frac{1}{N(\pi)^{s}}   = \zeta_{\calP} (\{ s \}^2)    .       
\end{align*} 

The details for the case $k=3$ are as follows:

\begin{align*}
&\sum_{\lambda\vdash 3} \frac{\zeta(s)^{m_1} \zeta(2s)^{m_2} \zeta(3s)^{m_3}}
{N(\lambda)m_1! m_2! m_3!} = \frac{1}{6} \Big( \zeta(s) \Big)^3 + \frac 12 \zeta(2s)\zeta(s) + \frac 13 \zeta(3s) \\
&= \frac16 \sum_{n_1\geq 1} n_1^{-s} \sum_{n_2\geq 1} n_2^{-s} \sum_{n_3\geq 1} n_3^{-s}
  + \frac 12 \sum_{n_1\geq 1} (n_1^2)^{-s} \sum_{n_2\geq 1} n_2^{-s}
  + \frac 13 \sum_{n\geq 1} (n^3)^{-s} \\
& = \left[ \frac 16 \sum_{(n_1,n_2,n_3)\in\calC_3} 
   + \left( \frac 16 \underset{n_1=n_2}{ \sum_{(n_1,n_2,n_3)\in\calC_3} }  
   + \frac 16\underset{n_1=n_3}{ \sum_{(n_1,n_2,n_3)\in\calC_3} }  
   + \frac 16\underset{n_2=n_3}{ \sum_{(n_1,n_2,n_3)\in\calC_3} } \right) \right. \\
&  \qquad     \left.            + \frac 13\underset{n_1=n_2=n_3}{\sum_{(n_1,n_2,n_3)\in\calC_3 }}
   \right] \frac{1}{(n_1n_2n_3)^s}  \\
&= \left[  \frac16 \left( \sum_{ \scriptstyle{\text{all $n_i$ unequal} } }
                                +\sum_{ n_1=n_2\neq n_3 } 
                                 +\sum_{ n_1=n_3\neq n_2 } 
                                  +\sum_{ n_1\neq n_2= n_3 } 
                                  +\sum_{ n_1= n_2= n_3 } 
                \right)
 \right. \\ 
  & \qquad +\frac 16 \left( \sum_{ n_1=n_2\neq n_3 }  + \sum_{n_1=n_2=n_3} \right)
                  + \frac 16 \left( \sum_{ n_1 = n_3\neq n_2} + \sum_{n_1=n_2=n_3} \right)\\
  & \qquad     \left.            + \frac 16 \left( \sum_{ n_1 \neq n_2 = n_3} + \sum_{n_1=n_2=n_3} \right)
     + \frac 13 \sum_{ n_1=n_2=n_3 } \right] \frac{1}{(n_1 n_2 n_3)^s}
\\ 
&= \left[  \frac16 \left( \sum_{ \scriptstyle{\text{all $n_i$ unequal} } } \right) 
     +\frac 13 \left( \sum_{ n_1=n_2\neq n_3 } +  \sum_{ n_1=n_3\neq n_2 }  
     +\sum_{ n_1\neq n_2= n_3 }  \right)  
     + 1 \sum_{n_1=n_2=n_3} \right] \frac{1}{(n_1n_2 n_3)^s } \\   
&= \left[  \frac16 \sum_{ \scriptstyle{\text{all $n_i$ unequal} } } 
     +\frac 13   
     \underset{n_1\lessgtr n_2=n_3}{\underset{n_1=n_3\lessgtr n_2,\mathrm{or}}{\sum_{ n_1=n_2\lessgtr n_3, \mathrm{or} }}}   
     +  \sum_{n_1=n_2=n_3} \right] \frac{1}{(n_1n_2 n_3)^s } \\ 
 &= \left[  \sum_{ n_1 > n_2 > n_3 \geq 1  } 
     + \left( \sum_{n_1 > n_2 = n_3 \geq 1} + \sum_{n_1 = n_2 > n_3 \geq 1} \right)  
     +  \sum_{n_1=n_2=n_3} \right] \frac{1}{(n_1n_2 n_3)^s }   \\
 &= \sum_{\substack{\pi \in \mathcal P\\\ell(\pi)=3}}\frac{1}{N(\pi)^{s}}  = \zeta_{\calP} (\{ s \}^3),     
\end{align*} 
where ``$a \lessgtr b$'' means $a\neq b$, but we mean to emphasize that only two of
the six cases where exactly two of the $n_i$ are equal, namely ``$n_1>n_2=n_3$'' 
and ``$n_1=n_2>n_3$," yield a partition. 

As the length $k$ increases, 
analogous combinatorial principles of splitting up, sorting compositions and regrouping are at play in the indices of summation. For the corresponding proof of the general $k$ case, the reader is referred to \cite{AS}; while they are too lengthy to conveniently reproduce here, the exact steps of the proof of MacMahon's partial fraction formula given in \cite{AS} also prove Theorem \ref{Pennthm} under certain substitutions, which we detail in the next section.
\section{Correspondence with MacMahon partial fractions}
The combinatorial identities in Section \ref{Sect3} mimic the second author's proof \cite{AS} of MacMahon's partial fraction decomposition of the generating function for partitions of length 
$\leq k$ 
(see \cite{M1,M2}). 
Let us recall MacMahon's result.

%
%
\begin{MD}
For $|q|<1$, we have 
\begin{equation} \label{MacDecomp}
{\prod_{j=1}^{k}\frac{1}{1-q^j}}=\sum_{\lambda \vdash k}\frac{1}{N(\lambda)\  m_1!\  m_2!\  m_3!\cdots m_k! \  (1-q)^{m_1}(1-q^2)^{m_2}(1-q^3)^{m_3}\cdots(1-q^k)^{m_k}}.
\end{equation}
\end{MD}

Multiplying both sides of~\eqref{MacDecomp} 
by $q^k$ and noting for every partition of $k$ that $k=m_1+2m_2+3m_3+\cdots+km_k$, 
gives the generating function for partitions of length {\it exactly} $k$: 
\begin{equation}\label{MMtimesq}
\frac{q^k}{\prod_{j=1}^{k}(1-q^j)}=\sum_{\lambda \vdash k}\frac{q^{m_1}q^{2m_2}q^{3m_3}\cdots q^{k m_k}}{N(\lambda)\  m_1!\  m_2!\  m_3!\cdots m_k! \  (1-q)^{m_1}(1-q^2)^{m_2}(1-q^3)^{m_3}\cdots(1-q^k)^{m_k}}.
\end{equation}
We point out that there is a 
simple bijection between partitions generated in \eqref{MMtimesq} and 
those in \eqref{MacDecomp}: for a partition $\lambda = (\lambda_1, 
\lambda_2,\dots, \lambda_j)$
of length $j\leq k$, identify $\lambda$ with the $k$-tuple
$(\lambda_1, \lambda_2,\dots, \lambda_j, 0,0,\dots,0)$ and then add $1$ to each 
component of the $k$-tuple to obtain a partition of length exactly $k$.


Then comparing equation \eqref{MMtimesq} to Theorem \ref{Pennthm}, 
one sees an apparent correspondence between $q$-series generating functions 
and the respective zeta function components in Theorem \ref{Pennthm}.\footnote{Similar correspondences between $q$-series and zeta functions have been studied by the first author and his collaborators (see \cite{ORS, OSW, Robert_zeta, PhD}).} We summarize these observations in the 
table below; corresponding entries evidently encode the same partition-theoretic information:
\vspace{0.25cm}

\begin{center}
\begin{tabular}{ | c | c | }
\hline
{\bf Generating function component} & {\bf Analogous zeta function component} \\ \hline
$q^{|\lambda|}$ & $N(\lambda)^{-s}$  \\ \hline
 $\frac{q^j}{1-q^j}$ & $\zeta(js)$  \\ \hline
$\frac{q^k}{\prod_{j=1}^{k}(1-q^j)}$ & $\zeta_{\mathcal P}(\left\{ s \right\}^k)$ \\ \hline
\end{tabular}
\end{center}
\vspace{0.25cm}

The analogy between the left-hand sides of~\eqref{MMtimesq} 
and Theorem \ref{Pennthm} is clear when we compare the following summation representations:
\begin{equation}\label{MMzeta}
\frac{q^k}{\prod_{j=1}^{k}(1-q^j)} = \sum_{\ell(\lambda)=k}q^{|\lambda|}\  \  \  \longleftrightarrow\  \   \  \zeta_{\mathcal P}(\left\{ s \right\}^k)=\sum_{\ell(\lambda)=k}N(\lambda)^{-s}. \end{equation}
There is a one-to-one correspondence between the terms in each summation, i.e., between the partitions being generated. We note that multiplication of terms of either shape $q^n$ or $n^{-s}$ generates partitions in exactly the same way, viz. for  partition $\lambda=(\lambda_1, \lambda_2,\lambda_3,...,\lambda_r)$: 
\begin{equation}\label{prod}
q^{\lambda_1} q^{\lambda_2}q^{\lambda_3} \cdots q^{\lambda_r} = q^{|\lambda|} \  \  \longleftrightarrow \  \  \lambda_1^{-s}{\lambda_2^{-s}} {\lambda_3^{-s}}\cdots \lambda_r^{-s}= N\left( \lambda \right)^{-s}.\end{equation} 
Therefore, taken in finite combinations, geometric series and zeta functions behave identically as partition generating functions\footnote{For infinite combinations, a different analogy holds, between product generating functions like $\prod_{j=1}^{\infty}(1-q^j)^{-1}$ and corresponding Euler products (see \cite{Robert_zeta}).}. Whereas the size $|\lambda|$ is the partition-encoding statistic in the former scheme, the norm $N(\lambda)$ encodes partitions in the latter. 

Moreover, the summand $q^{jn}$ in the geometric series $\sum_{n=1}^{\infty}q^{jn}$ 
and the respective term $n^{-js}$ of $\zeta(js)$ can both be viewed as encoding the partition $(n)^j:=(n,n,...,n)$ consisting of $j$ repetitions of the part $n$, 
viz.  
\begin{equation}\label{geomzeta}
\frac{q^j}{1-q^j} = \sum_{n=1}^{\infty}q^{|(n)^j|}\  \  \  \longleftrightarrow\  \   \  \zeta(js)=\sum_{n=1}^{\infty}N\left((n)^j\right)^{-s}. \end{equation}
The $n$th terms of the summations in \eqref{geomzeta} are in one-to-one correspondence.

Now, since the multiplicities $m_1, m_2, ..., m_k$ associated to any length-$k$ partition must add up to $k$, these nonzero multiplicities themselves represent a permutation of some partition of size $k$. This observation brings compositions into the picture, 
and also provides a link between partitions of length $k$ and those of size $k$. 

To prove MacMahon's partial fraction formula in \cite{AS}, 
roughly speaking, 
one replaces the term $q^j$ 
with a multivariate product $x_1 x_2 x_3 \cdots x_j,\  |x_i|<1$, then permutes the $x_i$'s. More exactly, 
the geometric factors in the right-hand summands of \eqref{MacDecomp} will be rewritten
\begin{equation}\label{MMpart}
\frac{1}{(1-x_1)^{m_1}\  (1-x_2 x_3)^{m_2}\  (1-x_4 x_5 x_6)^{m_3}\cdots}.
\end{equation}
One then permutes the $x_i$'s of nonempty geometric factors, effectively giving rise to compositions, while keeping track of the permutations in the indices of summation as in the examples in Section \ref{Sect3}, using properties of the symmetric group to enumerate multiply-counted terms. 
In the end one sets each dummy variable $x_i=q$ to produce \eqref{MacDecomp}. 

The counting arguments of \cite{AS} are unaltered in the case of partitions of length exactly $k$. One 
can rewrite the geometric aspect of each right-hand summand of \eqref{MMtimesq} as
\begin{equation}\label{MMpart2}
\frac{x_1^{m_1} (x_2 x_3)^{m_2} (x_4 x_5 x_6)^{m_3} \cdots}{(1-x_1)^{m_1}\  (1-x_2 x_3)^{m_2}\  (1-x_4 x_5 x_6)^{m_3}\cdots},
\end{equation}
and by precisely the same steps taken in \cite{AS}, equation \eqref{MMtimesq} is proved. Going a step further in this direction: as $|x_i|<1$, one can rewrite the $j$th geometric factor of \eqref{MMpart2} above in series form: 
\begin{equation}\label{MMhalfway}
\frac{x_{i_1} x_{i_{2}}\cdots x_{i_j}}{1-x_{i_1} x_{i_{2}}\cdots x_{i_j}}=\sum_{n=1}^{\infty} x_{i_1}^n x_{i_{2}}^n\cdots x_{i_j}^n,\end{equation}
noting $i_{m+1}=i_m+1$ in the indices, 
then in the sum on the right side of \eqref{MMhalfway} make the change
\begin{equation}
x_i^{n}\mapsto n^{-x_i},
\end{equation}
with the $x_i$'s now needing to satisfy $\operatorname{Re}(x_{i_1}+x_{i_2}+\cdots +x_{i_j})>1$ for convergence of the series. This mapping produces a one-to-one term-wise correspondence:
\begin{equation}\label{above}
\sum_{n=1}^{\infty}x_{i_1}^n x_{i_{2}}^n\cdots x_{i_j}^n \  \  \  \longleftrightarrow\  \   \    \sum_{n=1}^{\infty}n^{-x_{i_1}} n^{-x_{i_2}}\cdots n^{-x_{i_j}}.
\end{equation}
 Moreover, the terms of both these series are in one-to-one correspondence with the summands of the $j$th geometric factor of \eqref{MMpart} when it is expanded as a series:
\begin{equation}\label{MMhalfway2}
\frac{1}{1-x_{i_1} x_{i_{2}}\cdots x_{i_j}}=\sum_{n=1}^{\infty} x_{i_1}^{n-1} x_{i_{2}}
^{n-1}\cdots x_{i_j}^{n-1},
\end{equation}
and 
the symmetric group acts on the $x_i$'s identically in finite combinations of series of any one of these types.

Then one can replace the $j$th geometric factor 
of \eqref{MMpart}, $j=1,2,3,...,k$, with the right-hand multivariate zeta series in \eqref{above}, permute the $x_{i}$'s and enumerate permutations following exactly the steps in \cite{AS}; finally, 
one sets each $x_{i}=s$ such that  $x_{i_1}+x_{i_2}+\cdots+x_{i_j}=js$, 
to arrive at Theorem \ref{Pennthm}.


\begin{thebibliography}{A}
\bibitem{Andrews} G. E. Andrews, {\it The Theory of Partitions}, Encyclopedia of 
Mathematics and its Applications, vol. 2,  Addison--Wesley, Reading, MA, 1976.
Reissued, Cambridge University Press, 1998.



\bibitem{Berndt} B. C. Berndt, {\it Number Theory in the Spirit of Ramanujan},
Student Mathematical Library vol. 34, American Mathematical Soc., 2006.

%
%
%
%
%
\bibitem{M1} P.  A. MacMahon, 
{\it Combinatory Analysis}, vol. II, Cambridge University Press, 1916.
%
\bibitem{M2} P.  A. MacMahon, Seventh Memoir on the Partition of Numbers: A Detailed Study of the
Enumeration of the Partitions of Multipartite Numbers, 
{\it Phil. Trans. R. Soc. London A} {\bf CCXVII } (1917), 81--113.
%

\bibitem{ORS} K. Ono, L. Rolen, and R. Schneider, 
Explorations in the theory of partition zeta functions, 
{\it Exploring the Riemann Zeta Function, 190 years from Riemann's Birth}, 
editors: H. Montgomery, A. Nikeghbali, M. Rassias, Springer, 2017.   
%
\bibitem{OSW} K. Ono, R. Schneider, and I. Wagner, 
Partition-theoretic formulas for arithmetic densities, in:
\emph{Analytic Number Theory, Modular Forms and $q$-Hypergeometric Series: in Honor of Krishna Alladi's 60th Birthday, University of Florida, Gainesville, March 2016},
G. E. Andrews and F. Garvan, eds., Springer, Proceedings in Mathematics \& Statistics, Book 221, 2018.  
%
%
%
%
%
%
%
%
%

\bibitem{Robert_zeta} R. Schneider, Partition zeta functions, 
{\it Research in Num. Theory} {\bf 2} (2016),  article 9, 17 pp.


\bibitem{PhD} R. Schneider, {\it Eulerian Series, 
Zeta Functions and the Arithmetic of Partitions}, Ph.D. thesis, Emory University, 2018.

%
%
\bibitem{AS} A. V. Sills, On MacMahon's partial fractions, 
{\it Ann. Comb.}, to appear.
%
%
%
%
\end{thebibliography}
 \end{document}